\newcommand{\ncom}{\newcommand}     
\newcommand*{\Le}{\leqslant}
\newcommand*{\Ge}{\geqslant}
\ncom{\beqn}{\begin{eqnarray*}}
\ncom{\eeqn}{\end{eqnarray*}}
\ncom{\beq}{\begin{eqnarray}}
\ncom{\eeq}{\end{eqnarray}}
\ncom{\inp}[2]{\langle{#1},\,{#2} \rangle}
\ncom{\rar}{\rightarrow}
\newtheorem{theorem}{Theorem}[section]
\newtheorem{lemma}[theorem]{Lemma}
\newtheorem{proposition}[theorem]{Proposition}
\newtheorem{corollary}[theorem]{Corollary}
\theoremstyle{definition}
\theoremstyle{remark}
\newtheorem{remark}[theorem]{Remark}
\newtheorem{question}{Question}[section]
\newtheorem{example}[theorem]{Example}
\title[Multipliers of the Hilbert spaces of Dirichlet series]
{Multipliers of the Hilbert spaces of Dirichlet series}
   \author{Chaman Kumar Sahu}
   \address{Department of Mathematics and Statistics\\
Indian Institute of Technology Kanpur, India}
   \email{chamanks@iitk.ac.in}
\subjclass[2020]{Primary 30B50, 46E22; Secondary 11Z05.}
\keywords{Dirichlet series, Reproducing kernel Hilbert space, Multiplier, M$\ddot{\mbox{o}}$bius function,  Additive function}
\begin{document} 
 
\begin{abstract}  
	For a sequence  $\mathbf w = \{w_j\}_{j = 2}^\infty$ of positive real numbers,   consider the positive semi-definite kernel $\kappa_\mathbf w(s, u) = \sum_{j = 2}^\infty w_j j^{-s - \overline{u}}$ defined on some right-half plane $\mathbb H_{\rho}$ for a real number $\rho.$ Let $\mathscr H_\mathbf w$ denote the reproducing kernel Hilbert space associated with $\kappa_\mathbf w.$ 
Let 
\beqn
\delta_\mathbf w = \inf\Bigg\{\Re(s) : \sum\limits_{\substack{j \Ge 2 \\\tiny{\mbox{\bf gpf}}(j) \Le p_n }} w_j j^{- s} < \infty ~\text{for all}~ n \in \mathbb Z_+\Bigg\},  
\eeqn
where $\{p_j\}_{j \Ge 1}$ is an increasing enumeration of prime numbers and $\text{\bf gpf}(n)$ denotes the greatest prime factor of an integer $n \Ge 2.$ 
If $\mathbf w$ satisfies 
\beqn
\sum_{\substack{j \Ge 2\\ j | n}} j^{-\delta_\mathbf w} w_j \mu\Big(\frac{n}{j}\Big) \Ge 0,\quad n \Ge 2,
\eeqn
where $\mu$  is the M$\ddot{\mbox{o}}$bius function, then the multiplier algebra $\mathcal M(\mathscr H_\mathbf w)$ of $\mathscr H_\mathbf w$ is isometrically isomorphic to the space of all bounded and holomorphic functions on $\mathbb H_\frac{\delta_{\mathbf w}}{2}$ that are representable by a convergent Dirichlet series in some right half plane.
As a consequence, we describe the multiplier algebra $\mathcal M(\mathscr H_\mathbf w)$ when $\mathbf w$ is an additive function satisfying $\delta_{\mathbf w} \Le 0$ and
\begin{align*}
	\frac{w_{p^{j-1}}}{w_{p^j}} \Le  p^{-\delta_{\mathbf w}}~\text{for all integers} ~~ j \Ge 2~\mbox{and all prime numbers}~p.
\end{align*}
Moreover, we recover a result of Stetler that classifies the multipliers of $\mathscr H_\mathbf w$ when $\mathbf w$ is multiplicative. 
The proof of the main result is a refinement of the techniques of Stetler.
\end{abstract} 
\maketitle
\tableofcontents


 \section{Preliminaries}

Let $\mathbb Z_+$ and $\mathbb N$ denote the sets of positive and non-negative integers, respectively. For $k \in \mathbb N,$ let $\mathbb N_k = \{m \in \mathbb N : m \Le k\}.$
Denote by $\mathbb R$ and $\mathbb C,$ the sets of real and complex numbers, respectively. 
For $s \in \mathbb C,$ let $\Re(s),$ $|s|,$ $\overline{s}$ and $\arg(s)$ denote the real part, the modulus, the complex conjugate and the argument of $s,$ respectively. The open unit disc $\{z \in \mathbb C : |z| < 1\}$ is denoted by $\mathbb D.$ 
For $\rho \in \mathbb R,$ let $\mathbb H_\rho$ denote the right half-plane $\{s \in \mathbb C : \Re(s) > \rho\}.$

We invoke here some known arithmetic functions on $\mathbb Z_+$ used in the sequel. The {\it prime omega function} $\omega$ that counts the total number of all distinct prime factors of a positive integer. The {\it divisor function} $\mathbf d$ counts all divisors of a positive integer. The {\it M$\ddot{o}$bius function} $\mu: \mathbb Z_+ \rar \mathbb C$ is given by
\beq
\label{mu-fun}
\mu(n) = 
\begin{cases}
	1 & \text{if n = 1},\\ 
	(-1)^j & \text{if $n$ is a product of $j$ distinct primes},\\
	0  & \text{otherwise}.
\end{cases}
\eeq

A {\it Dirichlet series} is a series of the form
\beqn
f(s) = \sum_{j=1}^{\infty} a_{j} j^{-s},
\eeqn
where $a_j \in \mathbb C.$  
If $a_j = 1$ for all $j \Ge 1,$ then we have the Riemann zeta function, denoted by $\zeta.$   
If $f$ is convergent at $s=s_{0}$, then it converges 
uniformly throughout the angular region $\{s \in \mathbb C : |\arg(s-s_0)| < \frac{\pi}{2} -\delta\}$ for every positive real number $\delta < \frac{\pi}{2}.$ Consequently, $f$ defines a holomorphic function on $\mathbb H_{\Re(s_0)}$ (refer to \cite[Chapter~IX]{Ti} for the basic theory of Dirichlet series). Let $\mathcal D$ denotes the set of all functions which are representable by a convergent Dirichlet series in some right half plane. 	 

The following proposition describes the product of two Dirichlet series (see \cite[Theorem~4.3.1 and discussion prior to Theorem~4.3.4]{QQ}). 

\begin{proposition}\label{prod-series}
	Let $f(s) = \sum_{j=1}^\infty a_j j^{-s}$ and $g(s) = \sum_{j=1}^\infty b_j j^{-s}$ be two convergent Dirichlet series on $\mathbb H_\rho.$ If $g$ converges absolutely on $\mathbb H_\rho,$ then 
	\beqn
	f(s)g(s) = \sum_{j=1}^\infty \Big(\sum_{m | j} a_m b_{\frac{j}{m}}\Big) j^{-s},
	\eeqn
	which converges on $\mathbb H_\rho.$
\end{proposition}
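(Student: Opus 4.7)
The plan is to make rigorous the formal identity
\beqn
\Big(\sum_m a_m m^{-s}\Big)\Big(\sum_k b_k k^{-s}\Big) = \sum_j c_j j^{-s}, \quad c_j = \sum_{m \mid j} a_m b_{j/m},
\eeqn
obtained by collecting all pairs $(m,k)$ with $mk = j$. Fix $s \in \mathbb H_\rho$ and abbreviate $\alpha_m = a_m m^{-s}$ and $\beta_k = b_k k^{-s}$; by hypothesis $\sum_m \alpha_m = f(s)$ converges and $\sum_k |\beta_k| < \infty$ with sum $g(s)$. I would then compare the partial sum
\beqn
S_N \,:=\, \sum_{j=1}^{N} c_j j^{-s} \,=\, \sum_{\substack{m,k \Ge 1 \\ mk \Le N}} \alpha_m \beta_k
\eeqn
with the rectangular product $P_N := F_N G_N$, where $F_N = \sum_{m \Le N} \alpha_m$ and $G_N = \sum_{k \Le N} \beta_k$. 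Since $F_N \to f(s)$ and $G_N \to g(s)$, one has $P_N \to f(s) g(s)$, so the proposition reduces to showing that the hyperbolic remainder
\beqn
P_N - S_N \,=\, \sum_{\substack{m, k \Le N \\ mk > N}} \alpha_m \beta_k
\eeqn
vanishes as $N \to \infty$.

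To bound the remainder I would split the sum at $m = \sqrt{N}$. On the region $m \Le \sqrt{N}$ the inner tail $T_m := \sum_{N/m < k \Le N} \beta_k$ is dominated by $\sum_{k > \sqrt{N}} |\beta_k|$, which tends to $0$ by absolute convergence of $g$ at $s$. Applying Abel summation in $m$, with the bounded sequence $F_m$ (bounded because $F_n \to f(s)$), converts $\sum_{m \Le \sqrt N} \alpha_m T_m$ into a boundary term plus $\sum_{m} F_m (T_{m+1} - T_m)$; the increments $|T_{m+1} - T_m|$ are supported on disjoint $k$-intervals and telescope into a tail $\sum_{k > \sqrt N - 1} |\beta_k|$, which again vanishes.

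On the complementary region $m > \sqrt{N}$ I would reorganize by $k$. For $k \Le \sqrt{N}$ one has $N/k \Ge \sqrt{N}$, so the inner sum over $m$ equals $F_N - F_{\lfloor N/k \rfloor}$; for $k > \sqrt{N}$ the constraint $mk > N$ is automatic once $m > \sqrt{N}$, so the inner sum equals $F_N - F_{\lfloor \sqrt{N} \rfloor}$. The Cauchy property of $\{F_n\}$ forces $|F_N - F_{\lfloor N/k \rfloor}|$ to be arbitrarily small uniformly in $k \Le \sqrt{N}$ once $\sqrt{N}$ exceeds the Cauchy threshold, so the first contribution is bounded by a small constant times $\sum_{k \Le \sqrt N}|\beta_k| \Le \sum_{k} |\beta_k| < \infty$, and the second is a Cauchy remainder times a tail of $\sum_k |\beta_k|$; both vanish as $N \to \infty$. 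The main obstacle is precisely this $m > \sqrt{N}$ piece, because $f$ is only assumed convergent (not absolutely convergent), so one cannot dominate $|\alpha_m|$ summed over a long range; the fix is to replace absolute summability of the $\alpha_m$'s by the Cauchy property of their partial sums, combined with the absolute summability of the $\beta_k$'s. Since $s \in \mathbb H_\rho$ is arbitrary, this simultaneously yields convergence of $\sum_j c_j j^{-s}$ on all of $\mathbb H_\rho$ and identifies its value with $f(s)g(s)$.
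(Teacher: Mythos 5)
Your argument is correct. Note first that the paper does not prove this proposition at all: it is quoted from Queffelec--Queffelec (Theorem~4.3.1 there), so there is no in-paper proof to compare against; what you have written is essentially the classical Mertens-type theorem (convergent series times absolutely convergent series) transported from Cauchy products to Dirichlet convolutions, which is exactly the standard proof of the cited result. Your decomposition is sound: the identification $S_N=\sum_{mk\le N}\alpha_m\beta_k$ is just a regrouping of finitely many terms, $P_N\to f(s)g(s)$ is immediate, and both pieces of the hyperbolic remainder are killed correctly --- the $m\le\sqrt N$ piece by Abel summation against the bounded partial sums $F_m$ with total variation of the $T_m$ controlled by the tail $\sum_{k>\sqrt N}|\beta_k|$ (the key point that the increments live on disjoint $k$-intervals is right), and the $m>\sqrt N$ piece by the Cauchy property of $F_n$ uniformly over $k\le\sqrt N$ plus a tail estimate for $k>\sqrt N$. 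The only remark worth making is that you can shortcut the whole hyperbola method: writing directly $S_N=\sum_{k\le N}\beta_k F_{\lfloor N/k\rfloor}$ and comparing with $f(s)\sum_k\beta_k$, a single split at $k\le\sqrt N$ (where $F_{\lfloor N/k\rfloor}\to f(s)$ uniformly) versus $k>\sqrt N$ (where $\sup_n|F_n|$ times the tail of $\sum|\beta_k|$ suffices) gives the result without introducing $P_N$ or the Abel summation step. Either way, the proof is complete and the conclusion, including pointwise convergence of the product series on all of $\mathbb H_\rho$, follows as you state.
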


For an integer $n \Ge 2,$ let $\text{\bf gpf}(n)$ denote the greatest prime factor of $n.$ For a sequence $\mathbf x = \{x_j\}_{j \Ge 2}$ of non-negative real numbers, let 
\beqn
\sigma_{\mathbf x} := \inf\Big\{\Re(s) :  \sum_{j=2}^\infty x_j j^{-s} < \infty \Big\}, 
\eeqn
\beq \label{delta-form}
\delta_\mathbf x := \inf\Bigg\{\Re(s) : \sum\limits_{\substack{j \Ge 2 \\\tiny{\mbox{\bf gpf}}(j) \Le p_n }} x_j j^{- s} < \infty ~\text{for all}~ n \in \mathbb Z_+\Bigg\},  
\eeq
where $\{p_j\}_{j=1}^\infty$ is the increasing enumeration of prime numbers. Note that $\delta_\mathbf x \Le \sigma_\mathbf x.$

\subsection{Multipliers of a reproducing kernel Hilbert space}
Let $X$ be a non-empty open subset of $\mathbb C$ and 
let $H^{\infty}(X)$ denote the Banach space of bounded holomorphic functions on $X$ endowed with supremum norm. 
Let $\mathcal H$ be a reproducing kernel Hilbert space of complex-valued holomorphic functions on $X.$ A function $\varphi :X \rar \mathbb C$ is called {\it multiplier of $\mathcal H$} if $\varphi \cdot f \in \mathcal H$ for every $f \in \mathcal H.$ Clearly, $\mathcal M(\mathcal H)$ is an algebra. Denote by $\mathcal M(\mathcal H),$ the set of all multipliers of $\mathcal H.$ Note that if the constant function equal to $1$ belongs to $\mathcal H,$ then by \cite[Corollary~5.22]{P-R}, 
$\mathcal M(\mathcal H)$ is contractively contained in $H^{\infty}(X).$

For a sequence $\mathbf w = \{w_{j}\}_{j = 2}^{\infty}$ of positive real numbers,  
consider the weighted Hilbert space of the formal Dirichlet series 
\beqn \mathscr H_{\mathbf w} : = \Bigg\{f(s)= \sum_{j = 2}^{\infty} \hat{f}(j) j^{-s}   : \|f\|^2_{\mathbf w} := \sum_{j = 2}^{\infty} \frac{|\hat{f}(j)|^{2}}{w_{j}} \textless \infty\Bigg\}
\eeqn
endowed with the norm $\|\cdot\|_{\mathbf w}.$ 
If $\sigma_{\mathbf w} < +\infty,$ then $\mathscr H_{\mathbf w}$ is a reproducing kernel Hilbert space associated with $\kappa_\mathbf w$ given by
\beq \label{kappa-w}
\kappa_{\mathbf w}(s, u) = \sum_{j = 2}^{\infty} w_{j} j^{-s-\overline{u}}, \quad s, u \in \mathbb H_{\frac{\sigma_{\mathbf w}}{2}}.
\eeq
For every $s \in \mathbb H_{\frac{\sigma_{\mathbf w}}{2}},$ $\kappa_\mathbf w(\cdot, s) \in \mathscr H_\mathbf w$ and satisfies
\beq \label{rp}
\inp{f}{\kappa_\mathbf w(\cdot, s)} = f(s), \quad f \in \mathscr H_\mathbf w.
\eeq
Note that $\kappa_\mathbf w$ converges absolutely on $\mathbb H_{\frac{\sigma_{\mathbf w}}{2}} \times \mathbb H_{\frac{\sigma_{\mathbf w}}{2}}.$

The following is immediate from \cite[Theorem~3.1]{St} and the discussion prior to \cite[Theorem~2.7]{St}. 
\begin{theorem}\label{stetler_result_repeat}
	$\mathcal M(\mathscr H_\mathbf w)  \subseteq H^\infty(\mathbb H_{\frac{\delta_\mathbf w}{2}}) \cap \mathcal D$ and 
	\beqn
	\| \varphi \|_{\infty, \mathbb H_{\frac{\delta_\mathbf w}{2}}} \Le \| M_{\varphi, \mathbf w} \|_{\mathscr H_\mathbf w}, \quad \varphi \in \mathcal M(\mathscr H_\mathbf w).
	\eeqn
\end{theorem}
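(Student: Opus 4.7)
The plan is to prove the statement in three stages. First I would show that any $\varphi \in \mathcal M(\mathscr H_\mathbf w)$ admits a Dirichlet-series representation. Second I would bound each truncation $\varphi_n$ of that series (retaining only frequencies $m$ with $\mbox{\bf gpf}(m) \Le p_n$) uniformly by the multiplier norm on the \emph{larger} half-plane $\mathbb H_{\delta_\mathbf w/2}$, by compressing $M_{\varphi,\mathbf w}$ to an appropriate reproducing kernel subspace. Third, I would pass to the limit $n \to \infty$ by a normal families argument to transfer the bound to $\varphi$ itself.

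For the first stage, the closed graph theorem ensures that $M_{\varphi,\mathbf w}$ is bounded on $\mathscr H_\mathbf w.$ Applying it to $2^{-s}$ and $3^{-s}$ (both of which lie in $\mathscr H_\mathbf w$) produces Dirichlet series $\varphi(s)\cdot 2^{-s} = \sum_{k \Ge 2} b_k k^{-s}$ and $\varphi(s) \cdot 3^{-s} = \sum_{k \Ge 2} c_k k^{-s}$ that converge in $\mathscr H_\mathbf w,$ and hence on $\mathbb H_{\sigma_\mathbf w/2}.$ Writing $\varphi(s)\cdot 2^{-s}\cdot 3^{-s}$ in two ways and invoking the uniqueness of Dirichlet coefficients afforded by Proposition~\ref{prod-series} forces $b_k = 0$ for every odd $k.$ Setting $\hat{\varphi}(m) := b_{2m}$ then gives $\varphi(s) = \sum_{m \Ge 1} \hat{\varphi}(m)\,m^{-s},$ convergent on $\mathbb H_{\sigma_\mathbf w/2},$ so that $\varphi \in \mathcal D.$

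For the second stage, let $\varphi_n(s) = \sum_{m :\, \mbox{\bf gpf}(m) \Le p_n} \hat{\varphi}(m)\,m^{-s},$ and let $\mathscr H_n$ denote the closed linear span of $\{j^{-s} : j \Ge 2,\ \mbox{\bf gpf}(j) \Le p_n\}$ inside $\mathscr H_\mathbf w.$ Since the monomials $j^{-s}$ are pairwise orthogonal in $\mathscr H_\mathbf w,$ the orthogonal projection $P_n$ onto $\mathscr H_n$ simply discards Dirichlet coefficients at indices with $\mbox{\bf gpf} > p_n.$ A direct convolution computation, using the observation that every divisor $d$ of $k$ satisfies $\mbox{\bf gpf}(d) \Le \mbox{\bf gpf}(k),$ shows that $P_n M_{\varphi,\mathbf w} f = M_{\varphi_n}|_{\mathscr H_n} f$ for every $f \in \mathscr H_n.$ In particular $\mathscr H_n$ is $M_{\varphi_n}$-invariant with $\|M_{\varphi_n}|_{\mathscr H_n}\| \Le \|M_{\varphi,\mathbf w}\|.$ But $\mathscr H_n$ is itself a reproducing kernel Hilbert space whose kernel $\kappa_n(s,u) = \sum_{j \Ge 2,\,\mbox{\bf gpf}(j) \Le p_n} w_j\, j^{-s-\bar{u}}$ converges absolutely on $\mathbb H_{\delta_\mathbf w/2} \times \mathbb H_{\delta_\mathbf w/2}$ by the very definition of $\delta_\mathbf w.$ The standard RKHS multiplier identity $M_{\varphi_n}^* \kappa_n(\cdot,s) = \overline{\varphi_n(s)}\,\kappa_n(\cdot,s)$ in $\mathscr H_n$ then yields $|\varphi_n(s)| \Le \|M_{\varphi_n}|_{\mathscr H_n}\| \Le \|M_{\varphi,\mathbf w}\|$ for every $s \in \mathbb H_{\delta_\mathbf w/2}.$

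For the third stage, the Dirichlet series of $\varphi$ converges on $\mathbb H_{\sigma_\mathbf w/2},$ where the partial sums $\varphi_n$ converge uniformly to $\varphi$ on compact subsets. Combined with the uniform bound $|\varphi_n| \Le \|M_{\varphi,\mathbf w}\|$ on the larger half-plane $\mathbb H_{\delta_\mathbf w/2},$ Vitali's convergence theorem produces a holomorphic extension of $\varphi$ to $\mathbb H_{\delta_\mathbf w/2}$ satisfying $\|\varphi\|_{\infty,\mathbb H_{\delta_\mathbf w/2}} \Le \|M_{\varphi,\mathbf w}\|,$ which together with $\varphi \in \mathcal D$ yields the theorem. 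The main obstacle is the compression identity $P_n M_{\varphi,\mathbf w}|_{\mathscr H_n} = M_{\varphi_n}|_{\mathscr H_n}$: it is precisely here that the greatest-prime-factor filtration (rather than a filtration by magnitude) becomes essential, because the set $\{m : \mbox{\bf gpf}(m) \Le p_n\}$ is closed under taking divisors, and this closure property is what allows the truncated kernel $\kappa_n$ (which lives on $\mathbb H_{\delta_\mathbf w/2}$) to feed back into a norm bound on the full multiplier $\varphi.$
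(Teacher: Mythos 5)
The paper offers no proof of this statement: it is quoted directly from Stetler's thesis (\cite[Theorem~3.1]{St} together with the discussion before \cite[Theorem~2.7]{St}), and your argument --- Dirichlet-series representation via multiplication by $2^{-s}$ and $3^{-s}$, compression to the $\mathbf{gpf}$-filtered subspaces whose kernels converge on $\mathbb H_{\delta_\mathbf w/2}$, the eigenvalue identity for $M_{\varphi_n}^*$, and a Vitali/normal-families passage to the limit --- is a correct, self-contained reconstruction of essentially that cited proof, which is exactly why $\delta_\mathbf w$ is defined through the greatest-prime-factor truncations. The only point worth making explicit is that $\varphi_n$ itself is defined and given by an absolutely convergent series on all of $\mathbb H_{\delta_\mathbf w/2}$ (not just on $\mathbb H_{\sigma_\mathbf w/2}$), which follows since $\varphi_n\cdot 2^{-s}\in\mathscr H_n$ and elements of $\mathscr H_n$ converge absolutely there; with that observation your use of the multiplier identity on $\mathbb H_{\delta_\mathbf w/2}$ is fully justified.
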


This raises the following question.

\begin{question}\label{mult-problem}
	For which sequences $\mathbf w,$ the multiplier algebra $\mathcal M(\mathscr H_\mathbf w)$ of $\mathscr H_{\mathbf w}$ is isometrically isomorphic to $H^\infty(\mathbb H_{\frac{\delta_\mathbf w}{2}}) \cap \mathcal D?$	
\end{question}

In the following cases, Question~\ref{mult-problem} is answered affirmatively.

\begin{itemize}
	\item [$\mbox{(i)}$]  If $\mathbf 1$ denotes the constant sequence with value $1,$ then  $\mathcal M(\mathscr H_\mathbf 1)$ is isometrically isomorphic to $H^\infty(\mathbb H_0) \cap \mathcal D$ (see \cite[Theorem~3.1]{HLS}).  
	\item [$\mbox{(ii)}$] If, for some positive Radon measure $\eta$ on $[0, \infty)$ with $0 \in \text{supp}(\eta)$ and $n_0 \in \mathbb Z_+,$ 
	\beq \label{weight-by-measure}
	\frac{1}{w_j} = \int_{0}^{\infty}j^{-2 \sigma} d\eta(\sigma), \quad j \Ge n_0,
	\eeq
	then $\mathcal M(\mathscr H_{\mathbf w_\eta})$ is isometrically isomorphic to $H^\infty(\mathbb H_0) \cap \mathcal D$ (see \cite[Theorem~1.11]{Mc-1}).
	\item [$\mbox{(iii)}$] If $\mathbf w$ is either $\{\frac{1}{d_{\beta}(n)}\}_{n=1}^\infty,$ where $\beta > 0$ and $d_{\beta}(n)$ denotes $n$-th coefficient of the Dirichlet series $\zeta^\beta,$ or $\{(\mathbf d(n))^\alpha\}_{n=1}^\infty$ for $\alpha \in \mathbb R,$
	then $\mathcal M(\mathscr H_\mathbf w)$ is isometrically isomorphic to $H^\infty(\mathbb H_0) \cap \mathcal D$ (see \cite[Section~8]{Ol}).
	\item [$\mbox{(iv)}$] If $\mathbf w = \{w_n\}_{n=1}^\infty$ is a multiplicative function (i.e., $w_{mn} = w_m w_n$ for all integers $m, n \Ge 1$ such that $\gcd(m, n) = 1$) satisfying
	\begin{align} \label{mult-growth} 
		\frac{w_{p^{j-1}}}{w_{p^j}} \Le  p^{-\delta_{\mathbf w}}~\text{for all integers} ~~ j \Ge 1~\mbox{and prime numbers}~p,
	\end{align}	 
	then $\mathcal M(\mathscr H_\mathbf w)$ is isometrically isomorphic to $H^\infty(\mathbb H_{\frac{\delta_\mathbf w}{2}}) \cap \mathcal D$ (see \cite[Corollary~4.2]{St}). 
\end{itemize}

In all the cases above, the multipliers of $\mathscr H_\mathbf w$ are extended beyond the common domain of the functions in $\mathscr H_\mathbf w$. However, this is not always true (see \cite[Theorem~3.6]{Mc-1}).

The following is the main theorem of this paper, which generalizes the aforementioned results $\mbox{(i)}$ and $\mbox{(iv)}.$
\begin{theorem}\label{extension_stetler}
	Suppose that 
	\beq \label{norm-est}
	\sum_{\substack{j \Ge 2\\ j | n}} j^{-\delta_\mathbf w} w_j \mu\Big(\frac{n}{j}\Big) \Ge 0,\quad n \Ge 2,
	\eeq
	where $\mu$ and $\delta_\mathbf w$ are given by  \eqref{mu-fun} and \eqref{delta-form}, respectively. 
	Then $\mathcal M(\mathscr H_\mathbf w)$ is isometrically isomorphic to $H^\infty(\mathbb H_{\frac{\delta_\mathbf w}{2}}) \cap \mathcal D.$ 
\end{theorem}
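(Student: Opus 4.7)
By Theorem~\ref{stetler_result_repeat}, one half of the desired isomorphism is already in place: $\mathcal M(\mathscr H_\mathbf w) \subseteq H^\infty(\mathbb H_{\delta_\mathbf w/2}) \cap \mathcal D$ with the contractive bound $\|\varphi\|_{\infty, \mathbb H_{\delta_\mathbf w/2}} \Le \|M_{\varphi, \mathbf w}\|_{\mathscr H_\mathbf w}$. The plan is to establish the reverse inclusion and norm estimate: each $\varphi \in H^\infty(\mathbb H_{\delta_\mathbf w/2}) \cap \mathcal D$ is a multiplier of $\mathscr H_\mathbf w$ with $\|M_{\varphi, \mathbf w}\|_{\mathscr H_\mathbf w} \Le \|\varphi\|_{\infty, \mathbb H_{\delta_\mathbf w/2}}$. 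By the standard Aronszajn-type characterization of the multiplier norm, this reduces to showing that the defect kernel
\beqn
\bigl(\|\varphi\|_{\infty, \mathbb H_{\delta_\mathbf w/2}}^2 - \varphi(s)\overline{\varphi(u)}\bigr) \kappa_\mathbf w(s, u)
\eeqn
is positive semi-definite on a uniqueness subset of $\mathbb H_{\sigma_\mathbf w/2}$.

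The hypothesis \eqref{norm-est} is engineered so that $\kappa_\mathbf w$ admits a favorable factorization. Setting $b_j := j^{-\delta_\mathbf w} w_j$ for $j \Ge 2$, $b_1 := 0$, and $c_n := \sum_{j \mid n} b_j \mu(n/j)$, the assumption says precisely that $c_n \Ge 0$ for every $n \Ge 2$. Möbius inversion then yields $b_n = \sum_{d \mid n} c_d$, and substituting into $\kappa_\mathbf w$ and reindexing as in Proposition~\ref{prod-series} produces, on any right half-plane where all three series converge absolutely, the identity
\beqn
\kappa_\mathbf w(s, u) \;=\; \zeta(s + \overline{u} - \delta_\mathbf w) \cdot C(s + \overline{u} - \delta_\mathbf w), \qquad C(\tau) := \sum_{d \Ge 2} c_d d^{-\tau}.
\eeqn
Applying the Hedenmalm--Lindqvist--Seip theorem \cite{HLS} to $\widetilde{\varphi}(s) := \varphi(s + \delta_\mathbf w/2) \in H^\infty(\mathbb H_0) \cap \mathcal D$ (which has the same supremum norm as $\varphi$) and then translating variables, the kernel $(\|\varphi\|_\infty^2 - \varphi(s) \overline{\varphi(u)}) \zeta(s + \overline{u} - \delta_\mathbf w)$ is positive semi-definite on $\mathbb H_{(1 + \delta_\mathbf w)/2}$. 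Simultaneously, $C(s + \overline{u} - \delta_\mathbf w) = \sum_{d \Ge 2} (c_d d^{\delta_\mathbf w}) d^{-s} \overline{d^{-u}}$ is positive semi-definite, being a non-negative combination of rank-one positive kernels. The Schur product theorem then delivers positive semi-definiteness of the defect kernel on the overlap of the two domains, and analytic uniqueness inside $\mathscr H_\mathbf w$ upgrades this to the desired multiplier bound.

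The main obstacle I anticipate is careful convergence bookkeeping: the Dirichlet series $C(\tau)$ is not guaranteed to converge on a half-plane overlapping $\mathbb H_{\sigma_\mathbf w/2}$, so the factorization above is not directly available on the natural domain of $\kappa_\mathbf w$. I would handle this via greatest-prime-factor truncations of $\mathbf w$: restricting every sum to integers whose greatest prime factor is at most $p_n$, the defining property \eqref{delta-form} of $\delta_\mathbf w$ (combined with the pointwise bound $0 \Le c_j \Le b_j$) guarantees absolute convergence of every truncated series on $\mathbb H_{\delta_\mathbf w}$, so that the Schur argument applies cleanly to each truncated defect kernel on $\mathbb H_{\delta_\mathbf w/2}$. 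Monotone convergence of the truncated defect kernels to the full defect kernel as $n \to \infty$ then yields the required positive semi-definiteness. This is the promised refinement of Stetler's techniques: in the multiplicative setting of \cite[Corollary~4.2]{St}, the Euler-product structure of $\kappa_\mathbf w$ makes \eqref{norm-est} automatic and obviates the limiting step, whereas in general \eqref{norm-est} is precisely the positivity needed to run the Schur-product argument through.
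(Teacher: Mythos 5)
Your proposal follows essentially the same route as the paper: condition \eqref{norm-est} is exactly the statement that the M\"obius quotient $\eta=\kappa_{\tilde{\mathbf w}}/\kappa_{\mathbf 1}$ has nonnegative Dirichlet coefficients, hence is positive semi-definite (Lemma~\ref{posi-equiv}), and the paper likewise combines the Hedenmalm--Lindqvist--Seip bound for the $\zeta$-kernel with the product-of-positive-kernels (Schur) theorem --- packaged as a lurking isometry $V:\mathscr H_{\tilde{\mathbf w}}\to\mathscr H_{\mathbf 1}\otimes\mathcal H(\eta)$ rather than a defect kernel --- to obtain the contractive bound. The only substantive difference is the convergence bookkeeping: the paper needs no greatest-prime-factor truncation or limiting step, since it verifies the factorization on the smaller half-plane $\mathbb H_\beta$ with $\beta=\tfrac12\max\{\sigma_\mathbf w-\delta_\mathbf w,1\}$, where the kernel functions are still total in $\mathscr H_{\tilde{\mathbf w}}$.
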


\begin{remark}\label{trivial-case}
	Here we make some observations:
	\begin{itemize}
		\item [(a)] If $\delta_\mathbf w = -\infty,$ then there are no non-constant multipliers of $\mathscr H_\mathbf w.$ Indeed, if $\varphi \in  \mathcal M(\mathscr H_\mathbf w),$ then by Theorem~\ref{stetler_result_repeat}, $\varphi$ is entire and bounded. Hence, by the Liouville's theorem $($see \cite[Corollary~4.5]{S-S}$),$ 
		$\varphi$ is constant.
		\item [(b)] If $\mathbf w = \{w_j\}_{j=2}^\infty$ in the above theorem is replaced by $\{w_j\}_{j=k}^\infty \subseteq (0, \infty)$ for an integer $k \in \mathbb Z_+$ and  
		\beq \label{norm-est-gen} 
		\sum_{\substack{j \Ge k\\ j | n}} j^{-\delta_\mathbf w} w_j \mu\Big(\frac{n}{j}\Big) \Ge 0,\quad n \Ge k,
		\eeq 
		then the conclusion of Theorem~\ref{extension_stetler} will be same.  
 	\end{itemize}
\end{remark}

The following consequence of Theorem~\ref{extension_stetler} deals with additive functions.
\allowdisplaybreaks
\begin{corollary}\label{addi-result}
	Let $\mathbf w$ be an additive function $($i.e., $w_{mn} = w_m + w_n$ for all integers $m, n \Ge 2$ such that $\gcd(m, n) =1)$ satisfying
	\begin{align}\label{growth-con}
		\frac{w_{p^{j-1}}}{w_{p^j}} \Le  p^{-\delta_{\mathbf w}}~\text{for all integers} ~~ j \Ge 2~\mbox{and all prime numbers}~p.
	\end{align}	 
	If $\delta_\mathbf w \Le 0,$ then $\mathcal M(\mathscr H_\mathbf w)$ is isometrically isomorphic to $H^\infty(\mathbb H_{\frac{\delta_\mathbf w}{2}}) \cap \mathcal D.$ 
\end{corollary}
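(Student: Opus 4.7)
The plan is to apply Theorem~\ref{extension_stetler} by verifying its hypothesis \eqref{norm-est}. Extending $\mathbf{w}$ to $n = 1$ by the natural convention $w_1 = 0$ (forced by additivity, since $\gcd(1,1) = 1$ would give $w_1 = 2w_1$) and setting $f(n) = n^{-\delta_{\mathbf w}} w_n$, condition \eqref{norm-est} becomes $(f * \mu)(n) \Ge 0$ for every $n \Ge 2$, where $*$ is Dirichlet convolution.

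The first step will exploit additivity. Since $w_n = \sum_{p \mid n} w_{p^{v_p(n)}}$ for $n \Ge 2$ (where $v_p$ denotes the $p$-adic valuation), one obtains the decomposition $f = \sum_{p} h_p$ where
\[ h_p(n) = n^{-\delta_{\mathbf w}} w_{p^{v_p(n)}} \text{ if } p \mid n, \qquad h_p(n) = 0 \text{ if } p \nmid n. \]
It therefore suffices to show that $(h_p * \mu)(n) \Ge 0$ for every prime $p$ and every $n \Ge 2$.

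For the main computation, the quantity vanishes when $p \nmid n$, so suppose $n = p^a m$ with $a = v_p(n) \Ge 1$ and $\gcd(m, p) = 1$. Parametrizing the divisors $d$ of $n$ divisible by $p$ as $d = p^b e$ with $1 \Le b \Le a$ and $e \mid m$, the value $\mu(p^{a-b} m/e)$ vanishes unless $a - b \in \{0, 1\}$ and $m/e$ is squarefree. Retaining only the $b \in \{a, a-1\}$ contributions and evaluating the remaining squarefree sum as an Euler-type product yields
\[ (h_p * \mu)(n) = \bigl( p^{-a\delta_{\mathbf w}} w_{p^a} - p^{-(a-1)\delta_{\mathbf w}} w_{p^{a-1}} \bigr)\, m^{-\delta_{\mathbf w}} \prod_{q \mid m} \bigl(1 - q^{\delta_{\mathbf w}}\bigr), \]
with $q$ ranging over primes dividing $m$.

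Both factors will then be non-negative: the first by the growth condition \eqref{growth-con} (with the $a = 1$ case absorbed into $w_1 = 0$), and the second because $\delta_{\mathbf w} \Le 0$ forces $q^{\delta_{\mathbf w}} \Le 1$ for every prime $q$. Summing over primes gives $(f * \mu)(n) \Ge 0$, verifying \eqref{norm-est}, and Theorem~\ref{extension_stetler} then delivers the claimed isometric isomorphism. I do not anticipate any substantive obstacle: the only non-routine step is spotting the additive decomposition $f = \sum_p h_p$, which reduces the global Möbius sum to a single-prime calculation and lets the growth hypothesis and the sign constraint $\delta_{\mathbf w} \Le 0$ enter independently.
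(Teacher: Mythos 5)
Your proposal is correct and follows essentially the same route as the paper: the paper also sets $w_1=0$, uses additivity to split the M\"obius sum into per-prime pieces (its $T_t$ is exactly your $(h_{p_{i_t}}*\mu)(n)$), and factors each piece into the term $p^{-\delta_\mathbf w(a-1)}\bigl(p^{-\delta_\mathbf w}w_{p^{a}}-w_{p^{a-1}}\bigr)$ times a product of factors $p_{i_m}^{-\delta_\mathbf w(r_m-1)}(p_{i_m}^{-\delta_\mathbf w}-1)$, which agrees with your formula. Your Dirichlet-convolution phrasing is a cleaner packaging of the same computation, and the use of the growth condition together with $\delta_\mathbf w\Le 0$ is identical.
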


The proofs of Theorem~\ref{extension_stetler} and Corollary~\ref{addi-result} will be presented in Section~\ref{S2}. 
In Section~\ref{S3}, we discuss some consequences of Theorem~\ref{extension_stetler} (see Theorem~\ref{Mul-class} and Proposition~\ref{Another-class}). 
In particular, we recover \cite[Corollary~4.2]{St}. We also present some examples of the Hilbert spaces $\mathscr H_{\mathbf w}$ illustrating the results of this paper.

\section{The multiplier algebra of $\mathscr H_\mathbf w$}\label{S2}

In view of Remark~\ref{trivial-case} (a), we assume that $\delta_\mathbf w \in \mathbb R.$ To prove Theorem~\ref{extension_stetler}, we require several lemmas.

The first one relates the condition \eqref{norm-est} with the positive semi-definiteness of a kernel.

\begin{lemma}	\label{posi-equiv}
	Let $\tilde{\mathbf w} = \{n^{-\delta_\mathbf w} w_n \}_{n=2}^\infty$ and $\beta = \frac{1}{2}\max\{\sigma_\mathbf w-\delta_\mathbf w, 1\}.$ Then the kernel $\eta:\mathbb H_\beta \times \mathbb H_\beta \rar \mathbb C$ defined by 
	\beq \label{eta-def}
	\eta (s, u) = \frac{\kappa_{\tilde{\mathbf w}}(s, u)}{\kappa_{\mathbf 1}(s, u)}, \quad s, u \in \mathbb H_\beta
	\eeq
	is positive semi-definite if and only if \eqref{norm-est} holds.
\end{lemma}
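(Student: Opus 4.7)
The backbone is Möbius inversion. Set $a_n := n^{-\delta_{\mathbf w}} w_n$ for $n\Ge 2$, $a_1 := 0$, and define $b_n := \sum_{d\mid n} a_d\,\mu(n/d)$. Then $b_1 = 0$, classical Möbius inversion gives $a_n = \sum_{d\mid n} b_d$ for every $n\Ge 1$, and the hypothesis \eqref{norm-est} translates precisely to
\[
b_n \;\Ge\; 0 \quad\text{for every }n\Ge 2.
\]
Applying Dirichlet convolution, I compute on $\mathbb H_{2\beta}$
\[
\kappa_{\tilde{\mathbf w}}(s,u)
\;=\;\sum_{n\Ge 1}\Big(\sum_{d\mid n} b_d\Big) n^{-s-\bar u}
\;=\;\zeta(s+\bar u)\,B(s+\bar u),
\qquad B(w):=\sum_{n\Ge 2} b_n n^{-w},
\]
and, recalling $\kappa_{\mathbf 1}(s,u) = \zeta(s+\bar u)-1$, the kernel $\eta$ acquires the factorisation
\[
\eta(s,u) \;=\; \frac{\zeta(s+\bar u)}{\zeta(s+\bar u)-1}\cdot B(s+\bar u).
\]
Absolute convergence of $\kappa_{\tilde{\mathbf w}}$ and $B$ on $\mathbb H_{2\beta}$ is ensured by the choice $\beta = \tfrac12\max\{\sigma_{\mathbf w}-\delta_{\mathbf w},1\}$ together with the bound $|b_n|\Le\sum_{d\mid n}|a_d|$ and the subpolynomial growth of the divisor function.

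\smallskip

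The point of this factorisation is that $B(s+\bar u) = \sum_{n\Ge 2} b_n\,n^{-s}\overline{n^{-u}}$ is a superposition of rank-one positive semi-definite kernels with coefficients $b_n$, and is therefore itself positive semi-definite \emph{if and only if} $b_n\Ge 0$ for every $n\Ge 2$, i.e.\ if and only if \eqref{norm-est} holds. The lemma thus reduces to the claim ``\emph{$\eta$ is positive semi-definite $\iff$ $B(s+\bar u)$ is positive semi-definite}''. My plan is to obtain this by showing that the scalar factor $\zeta(w)/(\zeta(w)-1) = 1 + 1/(\zeta(w)-1)$, evaluated at $w = s+\bar u$, itself defines a positive semi-definite kernel on $\mathbb H_\beta\times\mathbb H_\beta$: expand $1/(\zeta(w)-1)$ as the Neumann series associated with $R(w):=\sum_{n\Ge 3}(n/2)^{-w}$ (which satisfies $|R(w)|<1$ on $\mathbb H_{2\beta}$ for $\beta$ large enough), and regroup the resulting double sum to present it as a positive superposition of rank-one kernels $r^{-s}\overline{r^{-u}}$ with $r>0$. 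Schur's product theorem then delivers the ``if'' implication. The ``only if'' is carried out by testing $\eta$ against finite Gram configurations of real points $s_1,\ldots,s_N\in(\beta,\infty)$ and isolating the coefficients $b_n$ from the asymptotic expansion of $\eta(s,u)$ as $\Re(s+\bar u)\to\infty$, inductively forcing each $b_n\Ge 0$.

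\smallskip

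The main obstacle is the positive semi-definiteness of the scalar factor $\zeta/\kappa_{\mathbf 1}$: because $1/(\zeta(w)-1)$ is not a Dirichlet series over integer frequencies, verifying that its Neumann expansion regroups into a manifestly non-negative combination of rank-one kernels requires careful combinatorial bookkeeping over the multiplicative semigroup generated by $\{n/2 : n\Ge 3\}$. The rest of the argument --- Möbius inversion, the factorisation $\kappa_{\tilde{\mathbf w}} = \zeta\cdot B$, and the coefficient criterion for positive semi-definiteness of Dirichlet-type kernels --- is essentially formal.
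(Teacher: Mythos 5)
Your Möbius-inversion backbone is sound and in fact reconstructs the heart of the paper's argument: the coefficients $b_n=\sum_{d\mid n}a_d\,\mu(n/d)$ are exactly the sums appearing in \eqref{norm-est}, and the series $B(s+\bar u)=\sum_{n\Ge 2}b_n n^{-s-\bar u}$ is exactly the kernel the paper analyzes. The decisive divergence is your reading $\kappa_{\mathbf 1}(s,u)=\zeta(s+\bar u)-1$. The paper's own proof expands $1/\kappa_{\mathbf 1}$ as $\sum_{j\Ge 1}\mu(j)j^{-s-\bar u}=1/\zeta(s+\bar u)$, i.e.\ $\kappa_{\mathbf 1}$ is taken to be $\zeta(s+\bar u)$ here (and the form of \eqref{norm-est}, whose $n$-th sum is precisely the $n$-th Dirichlet coefficient of $\kappa_{\tilde{\mathbf w}}/\zeta$, confirms this is the intended meaning). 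With that reading one has $\eta(s,u)=B(s+\bar u)$ on the nose, and the lemma is immediate from the coefficient criterion for positive semi-definiteness of Dirichlet-series kernels (\cite[Lemma~20]{MS}), which is the paper's one-paragraph proof. Your reading instead saddles you with the spurious scalar factor $\zeta(s+\bar u)/(\zeta(s+\bar u)-1)$, and everything difficult in your plan is devoted to that factor.

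That part of the plan does not go through. First, the Neumann expansion of $1/(1+R(w))$ with $R(w)=\sum_{n\Ge 3}(n/2)^{-w}$ cannot be run on the required domain: $\beta$ is fixed by the lemma and $2\beta$ may equal $1$, while $R(w)\rar\infty$ as $\Re(w)\rar 1^{+}$, so $|R(w)|<1$ fails there and you are not free to ``take $\beta$ large enough.'' Second, and fatally for the Schur-product step, the kernel $\zeta(s+\bar u)/(\zeta(s+\bar u)-1)=1+1/(\zeta(s+\bar u)-1)$ is \emph{not} positive semi-definite: a holomorphic $f$ for which $f(s+\bar u)$ is positive semi-definite on a right half-plane must restrict to a completely monotone (in particular non-increasing) function on the real axis, being a Laplace transform of a positive measure; but $1/(\zeta(x)-1)$ is increasing and tends to $+\infty$ as $x\rar\infty$. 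So the ``if'' direction collapses, and the reduction ``$\eta$ psd $\iff$ $B(s+\bar u)$ psd'' is never established (the ``only if'' sketch via Gram configurations and asymptotics is also too vague to assess, and a product being psd does not in general force a factor to be psd). The fix is simply to use $\kappa_{\mathbf 1}(s,u)=\zeta(s+\bar u)$: then $\eta=B(s+\bar u)$, your coefficient criterion applies directly (its ``only if'' half does need a justification, e.g.\ the uniqueness/almost-periodicity argument behind \cite[Lemma~20]{MS}), and the rest of your computation already constitutes the proof.
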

\begin{proof} By \cite[Example~$11.4.1$]{Ap}, 
	\beqn
	\frac{1}{\zeta(s)} = \sum_{j=1}^\infty \mu(j) j^{-s}, \quad s \in \mathbb H_1.
	\eeqn
	This, combined with   
	Proposition~\ref{prod-series} and \eqref{kappa-w}, yields 
	\beqn 
	\eta(s, u) &=& \Big(\sum_{j= 2}^\infty j^{-\delta_\mathbf w}w_j j^{-s-\overline{u}}\Big)  \Big(\sum_{j=1}^\infty \mu(j) j^{-s-\overline{u}}\Big) \notag\\
	&=& \sum_{n = 2}^\infty \Big(\sum_{\substack{j \Ge 2 \\ j | n}} j^{-\delta_\mathbf w}w_j \mu\Big(\frac{n}{j}\Big)\Big) n^{-s-\overline{u}}, \quad s, u \in \mathbb H_{\beta}.
	\eeqn
	The desired equivalence is now immediate from \cite[Lemma~20]{MS}. 
\end{proof}

The following lemma is needed in the proof of Theorem~\ref{extension_stetler}.

\begin{lemma}\label{kernel_shifting} 
	For $t \in \mathbb R,$ let $\tilde{\mathbf w} = \{n^{-t}w_n\}_{n= 2}^\infty.$ 
	If $\varphi \in \mathcal M(\mathscr H_\mathbf w),$ then $\tilde{\varphi} : \mathbb H_{\frac{\sigma_\mathbf w - t}{2}} \rar \mathbb C$ defined by 
	\beqn \tilde{\varphi}(s) = \varphi\Big(s + \frac{t}{2}\Big), \quad s \in \mathbb H_{\frac{\sigma_\mathbf w - t}{2}}
	\eeqn
	is a multiplier of $\mathscr H_{\tilde{\mathbf w}}$ $($see \eqref{kappa-w}$).$
\end{lemma}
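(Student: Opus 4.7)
The plan is to exhibit a unitary equivalence between $\mathscr H_{\mathbf w}$ and $\mathscr H_{\tilde{\mathbf w}}$ induced by the translation $s \mapsto s + t/2$, and then transport the multiplier $\varphi$ across this equivalence to obtain $\tilde{\varphi}$.

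First, I would note that $\sigma_{\tilde{\mathbf w}} = \sigma_{\mathbf w} - t$ directly from the definition, so $\mathscr H_{\tilde{\mathbf w}}$ is a reproducing kernel Hilbert space of Dirichlet series on $\mathbb H_{(\sigma_\mathbf w - t)/2}$, matching the stated domain of $\tilde{\varphi}$. Moreover, from \eqref{kappa-w} one has $\kappa_{\tilde{\mathbf w}}(s,u) = \kappa_{\mathbf w}(s + t/2,\, u + t/2)$, which is the analytic expression of the desired unitary equivalence at the level of kernels.

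Next, I would define $T \colon \mathscr H_{\mathbf w} \to \mathscr H_{\tilde{\mathbf w}}$ by $(Tf)(s) = f(s + t/2)$. For $f(s) = \sum_{j \ge 2} \hat f(j)\, j^{-s} \in \mathscr H_{\mathbf w}$ one computes $(Tf)(s) = \sum_{j \ge 2} j^{-t/2} \hat f(j)\, j^{-s}$, so the $j$-th Dirichlet coefficient of $Tf$ is $j^{-t/2} \hat f(j)$. Since $\tilde w_j = j^{-t} w_j$, this gives
\beqn
\|Tf\|_{\tilde{\mathbf w}}^2 \;=\; \sum_{j \ge 2} \frac{|j^{-t/2} \hat f(j)|^2}{j^{-t} w_j} \;=\; \sum_{j \ge 2} \frac{|\hat f(j)|^2}{w_j} \;=\; \|f\|_{\mathbf w}^2,
\eeqn
so $T$ is an isometric embedding. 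Surjectivity follows by the analogous construction sending $g \in \mathscr H_{\tilde{\mathbf w}}$ with coefficients $\hat g(j)$ to the Dirichlet series with coefficients $j^{t/2} \hat g(j)$, which lies in $\mathscr H_{\mathbf w}$ with the same norm. Hence $T$ is a unitary.

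Finally, given $\varphi \in \mathcal M(\mathscr H_{\mathbf w})$ and arbitrary $g \in \mathscr H_{\tilde{\mathbf w}}$, I write $g = Tf$ for a unique $f \in \mathscr H_{\mathbf w}$ and compute pointwise on $\mathbb H_{(\sigma_{\mathbf w} - t)/2}$:
\beqn
(\tilde{\varphi} \cdot g)(s) \;=\; \varphi(s + t/2)\, f(s + t/2) \;=\; \big(T(\varphi \cdot f)\big)(s).
\eeqn
Since $\varphi \cdot f \in \mathscr H_{\mathbf w}$ by hypothesis, we conclude $\tilde{\varphi} \cdot g = T(\varphi f) \in \mathscr H_{\tilde{\mathbf w}}$, so $\tilde{\varphi} \in \mathcal M(\mathscr H_{\tilde{\mathbf w}})$ (with operator norm equal to $\|M_{\varphi, \mathbf w}\|$, as an immediate byproduct). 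The proof is a transfer-of-structure argument and I do not anticipate a genuine obstacle; the only point requiring care is the coefficient bookkeeping above that certifies $T$ is a well-defined surjective isometry between the two weighted spaces.
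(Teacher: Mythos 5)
Your proposal is correct and follows essentially the same route as the paper: both define the translation map $f \mapsto f(\cdot + t/2)$ from $\mathscr H_{\mathbf w}$ onto $\mathscr H_{\tilde{\mathbf w}}$ and use its surjectivity to transport the multiplier. You additionally spell out the coefficient computation showing the map is a surjective isometry (which the paper leaves implicit), yielding the norm equality as a bonus, but the argument is the same.
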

\begin{proof}
	Let $j: \mathscr H_\mathbf w \rar \mathscr H_{\tilde{\mathbf w}}$ be a map defined by 
	\beqn 	
	j(f)(s) = f\Big(s + \frac{t}{2}\Big), \quad s \in \mathbb H_{\frac{\sigma_\mathbf w - t}{2}}. 
	\eeqn	
	Since $\varphi \in \mathcal M(\mathscr H_\mathbf w),$ for any $f \in \mathscr H_{\mathbf w},$
	\beqn
	\tilde{\varphi}(s)j(f)(s) &=& \varphi \Big(s + \frac{t}{2}\Big) f \Big(s + \frac{t}{2}\Big)\\
	&=& (\varphi \cdot f) \Big(s + \frac{t}{2}\Big) = j(\varphi \cdot f)(s),~~ s \in \mathbb H_{\frac{\sigma_\mathbf w - t}{2}}.
	\eeqn
	As $j$ is surjective,  $\tilde{\varphi}\cdot g \in \mathscr H_{\tilde{\mathbf w}}$ for every $g \in \mathscr H_{\tilde{\mathbf w}}.$ This concludes the proof.
\end{proof}

The following fact shows that the inclusion map $\mathcal M(\mathscr H_\mathbf w) \xhookrightarrow{} H^\infty(\mathbb H_{\frac{\delta_\mathbf w}{2}}) \cap \mathcal D$ is surjective (cf. \cite[Theorem~4.5]{St}).

\begin{lemma}\label{Imp_theorem1}
	If \eqref{norm-est} holds, then 
	$H^\infty(\mathbb H_{\frac{\delta_\mathbf w}{2}}) \cap \mathcal D = \mathcal M(\mathscr H_\mathbf w)$ $($as a set$).$
\end{lemma}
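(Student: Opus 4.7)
The plan is to establish the non-trivial inclusion $H^\infty(\mathbb H_{\delta_\mathbf w/2}) \cap \mathcal D \subseteq \mathcal M(\mathscr H_\mathbf w)$, as the reverse inclusion is already supplied by Theorem~\ref{stetler_result_repeat}. Given $\varphi \in H^\infty(\mathbb H_{\delta_\mathbf w/2}) \cap \mathcal D$, I will first rescale the weight by introducing $\tilde{\mathbf w} = \{n^{-\delta_\mathbf w} w_n\}_{n \Ge 2}$, which normalizes the problem so that $\delta_{\tilde{\mathbf w}} = 0$, and form the shifted function $\tilde\varphi(s) := \varphi(s + \delta_\mathbf w/2)$, which lies in $H^\infty(\mathbb H_0) \cap \mathcal D$ with $\|\tilde\varphi\|_\infty = \|\varphi\|_\infty =: C$. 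The heart of the argument will be to verify that $\tilde\varphi \in \mathcal M(\mathscr H_{\tilde{\mathbf w}})$, after which Lemma~\ref{kernel_shifting}, applied with weight $\tilde{\mathbf w}$ and parameter $t = -\delta_\mathbf w$, transports the conclusion back to $\varphi \in \mathcal M(\mathscr H_\mathbf w)$.

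For the core step, I will combine the Hedenmalm--Lindqvist--Seip theorem (case $\mathrm{(i)}$ of the introduction) with the Schur product theorem. By Hedenmalm--Lindqvist--Seip, $\tilde\varphi \in \mathcal M(\mathscr H_{\mathbf 1})$ with multiplier norm $C$, which translates to the positive semi-definiteness of $(C^2 - \overline{\tilde\varphi(u)}\tilde\varphi(s))\kappa_{\mathbf 1}(s,u)$ on $\mathbb H_{1/2} \supseteq \mathbb H_\beta$. Simultaneously, Lemma~\ref{posi-equiv} converts the hypothesis \eqref{norm-est} into the positive semi-definiteness of $\eta = \kappa_{\tilde{\mathbf w}}/\kappa_{\mathbf 1}$ on $\mathbb H_\beta$. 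Taking Schur products yields
\[
(C^2 - \overline{\tilde\varphi(u)}\tilde\varphi(s))\kappa_{\tilde{\mathbf w}}(s,u) = \Bigl[(C^2 - \overline{\tilde\varphi(u)}\tilde\varphi(s))\kappa_{\mathbf 1}(s,u)\Bigr] \cdot \eta(s,u),
\]
which is therefore positive semi-definite on $\mathbb H_\beta \times \mathbb H_\beta$.

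The main obstacle will be to promote this positive semi-definiteness from $\mathbb H_\beta$ to the natural domain $\mathbb H_{(\sigma_\mathbf w - \delta_\mathbf w)/2}$ of $\kappa_{\tilde{\mathbf w}}$, which can be strictly larger (precisely when $\sigma_\mathbf w - \delta_\mathbf w < 1$). I intend to resolve this through the standard adjoint extension: the positive semi-definiteness on $\mathbb H_\beta$ endows the linear span of $\{\kappa_{\tilde{\mathbf w}}(\cdot,s) : s \in \mathbb H_\beta\}$, dense in $\mathscr H_{\tilde{\mathbf w}}$ by the identity principle for Dirichlet series, with an operator $T\kappa_{\tilde{\mathbf w}}(\cdot,s) = \overline{\tilde\varphi(s)}\kappa_{\tilde{\mathbf w}}(\cdot,s)$ of norm at most $C$; its unique bounded extension $\widetilde T$ to $\mathscr H_{\tilde{\mathbf w}}$ then has adjoint satisfying $(\widetilde T^* f)(s) = \tilde\varphi(s)f(s)$ for $s \in \mathbb H_\beta$. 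Because $\widetilde T^* f \in \mathscr H_{\tilde{\mathbf w}}$ is holomorphic on $\mathbb H_{(\sigma_\mathbf w - \delta_\mathbf w)/2}$ and coincides with the holomorphic function $\tilde\varphi \cdot f$ on $\mathbb H_\beta$, the identity principle forces $\widetilde T^* f = \tilde\varphi \cdot f$ throughout the full domain, giving $\tilde\varphi \in \mathcal M(\mathscr H_{\tilde{\mathbf w}})$ and hence, via the reverse application of Lemma~\ref{kernel_shifting}, the desired conclusion $\varphi \in \mathcal M(\mathscr H_\mathbf w)$.
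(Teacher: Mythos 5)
Your proposal is correct and follows essentially the same route as the paper's proof: shift to $\tilde\varphi$ and $\tilde{\mathbf w}=\{n^{-\delta_\mathbf w}w_n\}$, invoke Hedenmalm--Lindqvist--Seip to get $\tilde\varphi\in\mathcal M(\mathscr H_{\mathbf 1})$, multiply the resulting positive semi-definite kernel by $\eta=\kappa_{\tilde{\mathbf w}}/\kappa_{\mathbf 1}$ (Lemma~\ref{posi-equiv} plus the Schur product theorem), and transport back via Lemma~\ref{kernel_shifting}. The only difference is that you spell out the extension of the multiplication property from $\mathbb H_\beta$ to the full domain of $\mathscr H_{\tilde{\mathbf w}}$, a detail the paper leaves implicit.
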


\begin{proof}
	Assume that \eqref{norm-est} holds. In view of Theorem~\ref{stetler_result_repeat}, it suffices to check that 
	$H^\infty(\mathbb H_{\frac{\delta_\mathbf w}{2}}) \cap \mathcal D \subseteq \mathcal M(\mathscr H_\mathbf w).$
	To see that, let $\varphi \in H^\infty(\mathbb H_{\frac{\delta_\mathbf w}{2}}) \cap \mathcal D.$ Then $\tilde{\varphi}: \mathbb H_0 \rar \mathbb C$ defined by $\tilde{\varphi}(s) = \varphi (s+\frac{\delta_\mathbf w}{2}),~ s \in \mathbb H_0,$ belongs to $H^\infty(\mathbb H_0) \cap \mathcal D.$ Hence, by \cite[Theorem~3.1]{HLS}, 
	$\tilde{\varphi}$ is a multiplier of $\mathscr H_\mathbf 1.$ Therefore, by \cite[Theorem~5.21]{P-R}, there exists $c \Ge 0$ such that 
	\beqn
	(c^2 - \tilde{\varphi} (s) \overline{\tilde{\varphi}(u)}) \kappa_\mathbf 1(s, u) \Ge 0.
	\eeqn
	Let $\tilde{\mathbf w} = \{ n^{-\delta_\mathbf w}w_n\}_{n=2}^\infty.$ Then, by Lemma~\ref{posi-equiv} and \cite[Theorem~4.8]{P-R}, 
	\beqn
	(c^2 - \tilde{\varphi} (s) \overline{\tilde{\varphi}(u)}) \kappa_{\tilde{\mathbf w}}(s, u) \Ge 0.
	\eeqn
	Thus, by \cite[Theorem~5.21]{P-R}, $\tilde{\varphi} \in \mathcal M(\mathscr H_{\tilde{\mathbf w}}).$ An application of Lemma~\ref{kernel_shifting} now completes the proof.
\end{proof}

\begin{proof} [{\it Proof of Theorem~\ref{extension_stetler}}]
	By Lemma~\ref{Imp_theorem1}, 
	\beq
	\label{appli-lemma-Imp}
	\mbox{$H^\infty(\mathbb H_{\frac{\delta_\mathbf w}{2}}) \cap \mathcal D = \mathcal M(\mathscr H_\mathbf w)$ $($as a set$).$}
	\eeq
	Hence, in view of Theorem~\ref{stetler_result_repeat}, it is sufficient to check that
	\beqn
	\|M_{\varphi, \mathbf w}\|_{\mathscr H_\mathbf w} \Le \|\varphi\|_{\infty, \mathbb H_{\frac{\delta_\mathbf w}{2}}}, \quad \varphi \in H^\infty(\mathbb H_{\frac{\delta_\mathbf w}{2}}) \cap \mathcal D.
	\eeqn
	
	To see the above estimate, let $\varphi \in H^\infty(\mathbb H_{\frac{\delta_\mathbf w}{2}}) \cap \mathcal D.$ Note that $\tilde{\varphi} :\mathbb H_0 \rar \mathbb C,$ defined by $\tilde{\varphi}(s) = \varphi(s + \frac{\delta_\mathbf w}{2}),~s \in \mathbb H_0,$ belongs to $H^\infty(\mathbb H_0) \cap \mathcal D.$ Therefore, by an application of \cite[Theorem~3.1]{HLS}, 
	\beqn 
	\mbox{the multiplication operator $M_{\tilde{\varphi}, \mathbf 1}$ is bounded on $\mathscr H_\mathbf 1.$}
	\eeqn
	In particular,  the Hilbert space adjoint $M^*_{\tilde{\varphi}, \mathbf 1}$ of $M_{\tilde{\varphi}, \mathbf 1}$ is well-defined as a bounded linear operator. 
	
	Note that Lemma~\ref{posi-equiv} combined with \eqref{norm-est} yields that the kernel $\eta$ defined by \eqref{eta-def}, is positive semi-definite on $\mathbb H_\beta \times \mathbb H_\beta,$ where $\beta = \frac{1}{2}\max\{\sigma_\mathbf w-\delta_\mathbf w, 1\}.$ Hence, by \cite[Theorem~2.14]{P-R}, there exists a reproducing kernel Hilbert space $\mathcal H(\eta)$ associated with $\eta.$ Thus, by \eqref{rp}, for any $s, u \in \mathbb H_\beta,$
	\beqn
	\inp{\kappa_{\tilde{\mathbf w}}(\cdot, u)}{\kappa_{\tilde{\mathbf w}}(\cdot, s)}_{\mathscr H_{\tilde{\mathbf w}}} = \inp{\kappa_\mathbf 1(\cdot, u) \otimes \eta(\cdot, u)}{\kappa_\mathbf 1(\cdot, s) \otimes \eta(\cdot, s)}_{\mathscr H_\mathbf 1 \otimes \mathcal H(\eta)}.
	\eeqn
	An application of the Lurking isometry Lemma (see \cite[Lemma~2.18]{AMY}) yields a linear isometry $V: \mathscr H_{\tilde{\mathbf w}} \rar \mathscr H_\mathbf 1 \otimes \mathcal H(\eta)$ such that
	\beq \label{isometry}
	V(\kappa_{\tilde{\mathbf w}}(\cdot, u)) = \kappa_\mathbf 1(\cdot, u) \otimes \eta(\cdot, u), \quad u \in \mathbb H_{\beta}.
	\eeq
	
	Further, since $\varphi \in \mathcal M(\mathscr H_\mathbf w)$ (see \eqref{appli-lemma-Imp}), by Lemma~\ref{kernel_shifting}, $\tilde{\varphi } \in \mathcal M(\mathscr H_{\tilde{\mathbf w}}).$ This implies that the multiplication operator $M_{\tilde{\varphi}, {\tilde{\mathbf w}}}$ is bounded on $\mathscr H_{\tilde{\mathbf w}},$ and hence, by \cite[Corollary~5.22]{P-R},
	\beqn
	V^{*} (M_{\tilde{\varphi}, \mathbf 1}^* \otimes I)V (\kappa_{\tilde{\mathbf w}}(\cdot, u)) &\overset{ \eqref{isometry}}=& V^{*} (M_{\tilde{\varphi}, \mathbf 1}^* \otimes I)( \kappa_\mathbf 1(\cdot, u) \otimes \eta(\cdot, u))\\ &=& \overline{\tilde{\varphi}(u)}V^* ( \kappa_\mathbf 1(\cdot, u) \otimes \eta(\cdot, u))\\
	&=& M_{\tilde{\varphi}, {\tilde{\mathbf w}}}^* (\kappa_{\tilde{\mathbf w}}(\cdot, u)),\quad u \in \mathbb H_\beta.
	\eeqn
	Since $V^{*} (M_{\tilde{\varphi}, \mathbf 1}^* \otimes I)V$ and  $M_{\tilde{\varphi}, {\tilde{\mathbf w}}}^*$ are bounded linear operators on $\mathscr H_{\tilde{\mathbf w}},$  
	\beqn
	V^{*} (M_{\tilde{\varphi}, \bf 1}^* \otimes I)V= M_{\tilde{\varphi}, \tilde{\mathbf w}}^*.
	\eeqn
	Because $V$ is an isometry, we obtain
	\beqn
	\|M_{\tilde{\varphi}, \tilde{\mathbf w}}\|_{_{_{\mathscr H_{\tilde{\mathbf w}}}}} \Le \|V^{*} \|  \|M_{\tilde{\varphi}, \bf 1}^* \|  \| V\| \Le \|\tilde{\varphi}\|_{\infty,\, \mathbb H_0} = \|\varphi\|_{\infty,\, \mathbb H_\frac{\delta_\mathbf w}{2}},
	\eeqn
	which completes the proof.
\end{proof}
\begin{remark}
	Let $\kappa$ be given by
	\beqn
	\kappa(z, w) = \sum_{j=0}^\infty a_j z^j \overline{w}^j.
	\eeqn 
	Suppose that $\kappa$ converges on $\mathbb D \times \mathbb D.$ If $\mathbf a$ is an increasing sequence of non-negative real numbers, 
	then the multiplier algebra $\mathcal M(\mathscr H(\kappa))$ of $\mathscr H(\kappa)$ is isometrically isomorphic to $H^\infty(\mathbb D).$ This can be shown using the arguments in the proof of Theorem~\ref{extension_stetler} with the essential change that  $\kappa_\mathbf 1$ is replaced by the Szeg$\ddot{\mbox{o}}$ kernel $S: \mathbb D \times \mathbb D \rar \mathbb C$ defined by
	\beqn
	S(z, w) = \frac{1}{1-z\overline{w}}, \quad z, w \in \mathbb D.
	\eeqn
\end{remark}

\begin{proof} [{\it Proof of Corollary~\ref{addi-result}}]
	Fix an integer $n \Ge 2,$ let $\prod\limits_{m=1}^{\omega(n)} p_{i_m}^{r_m},~r_m \in \mathbb Z_+,$ be the prime factorization of $n.$ If $j \in \mathbb Z_+$ such that $j | n,$ then
	\beq \label{factor-of-n}
	j = \prod_{m=1}^{\omega(n)} p_{i_m}^{s_{m, j}},\quad  (s_{m, j})_{m=1}^{\omega(n)} \in \prod_{m=1}^{\omega(n)} \mathbb N_{r_m}.
	\eeq
	Hence, there exists a bijective map $\psi : \!\{j \in \mathbb Z_+ \! :j | n,~ j \Ge 2\} \rar \prod\limits_{m=1}^{\omega(n)} \mathbb N_{r_m} - \{\bf 0\}$ defined by $\psi (j) = (s_{m, j})_{m=1}^{\omega(n)}.$ Therefore,
	by \eqref{factor-of-n},
	
	\beq 
	&&	\notag \sum_{\substack{j \Ge 2 \\ j | n}} j^{-\delta_\mathbf w} w_j \mu\Big(\frac{n}{j}\Big) \\ \label{exp-M}
	&=& \sum_{(s_m)_{m=1}^{\omega(n)} \in \, \text{Im}(\psi)}\!\!\! \Big(\prod_{m=1}^{\omega(n)} p_{i_m}^{-\delta_\mathbf w s_m}\Big) \big(w_{_{\prod_{m=1}^{\omega(n)} p_{i_m}^{s_m}}}\big) \mu\Big({\prod_{m=1}^{\omega(n)} p_{i_m}^{r_m-s_m}}\Big).
	\eeq
	Let $\tilde{\mathbf w}$ be the extension of $\mathbf w$ to $\mathbb Z_+$ by letting $w_1=0.$ Since $\mathbf w$ is additive, $\tilde{\mathbf w}$ is additive.  
	This together with \eqref{exp-M} and the multiplicativity of $\mu$ gives 
	\allowdisplaybreaks
	\beqn
	\sum_{\substack{j \Ge 2 \\ j | n}} j^{-\delta_\mathbf w} w_j \mu\Big(\frac{n}{j}\Big)\!\!\!\!\!\!
	&=&\!\!\!\!\!\! \sum_{(s_m)_{m=1}^{\omega(n)} \in \,\text{Im}(\psi)} \sum_{t=1}^{\omega(n)} \Bigg(w_{p_{i_t}^{s_t}} \Big(\prod_{m=1}^{\omega(n)} p_{i_m}^{-\delta_\mathbf w s_m}\Big) \mu\Big({\prod_{m=1}^{\omega(n)} p_{i_m}^{r_m-s_m}}\Big) \Bigg) \notag \\
	&=& \sum_{t=1}^{\omega(n)}\! \sum_{(s_m)_{m=1}^{\omega(n)} \in \,\text{Im}(\psi)} \!\!\!\!\!\! \Big(w_{p_{i_t}^{s_t}} \Big(\!\!\prod_{m=1}^{\omega(n)} p_{i_m}^{-\delta_\mathbf w s_m}\Big) \Big(\prod_{m=1}^{\omega(n)} \mu(p_{i_m}^{r_m-s_m})\Big)\!\Big)\\
	&\overset{\eqref{mu-fun}}{=}& \sum_{t=1}^{\omega(n)} \sum_{(s_m)_{m=1}^{\omega(n)} \in \, \mathcal U_n} \!\!\!\!\!\! \Big(w_{p_{i_t}^{s_t}} \prod_{m=1}^{\omega(n)} \Big(p_{i_m}^{-\delta_\mathbf w s_m} \mu(p_{i_m}^{r_m-s_m})\Big)\Big),
	\eeqn 
	where $\mathcal U_n = \prod\limits_{m=1}^{\omega(n)} \{r_m-1, r_m\}.$ For any positive integer $t \Le \omega(n),$ let
	\beqn
	T_t =  \sum_{(s_m)_{m=1}^{\omega(n)} \in \,\mathcal U_n} \!\!\!\!\!\! \Big(w_{p_{i_t}^{s_t}} \prod_{m=1}^{\omega(n)} \Big(p_{i_m}^{-\delta_\mathbf w s_m} \mu(p_{i_m}^{r_m-s_m})\Big)\Big).
	\eeqn
	Then 
	\beqn
	T_t\!\!
	&\!\!=& \sum_{s_1 = \,r_1-1}^{r_1} \sum_{s_2 = \,r_2-1}^{r_2} \ldots \sum_{s_{\omega(n)} = \,r_{\omega(n)}-1}^{r_{\omega(n)}} 
	\Big(w_{p_{i_t}^{s_t}} \prod_{m=1}^{\omega(n)} \Big(p_{i_m}^{-\delta_\mathbf w s_m} \mu(p_{i_m}^{r_m-s_m})\Big)\Big)\\
	&\!\!=& \!\!\Big(\!\!\sum_{s_t = r_t-1}^{r_t} \!\!\! p_{i_t}^{-\delta_{_\mathbf w} s_t}  w_{p_{i_t}^{s_t}} \mu(p_{i_t}^{r_t-s_t})\Big) \prod_{\substack{m=1 \\ m \neq t}}^{\omega(n)} \Big(\sum_{s_m = r_m-1}^{r_m}  p_{i_m}^{-\delta_\mathbf w s_m} \mu(p_{i_m}^{r_m-s_m})\Big)\\
	&=&  \big( p_{i_t}^{-\delta_{_\mathbf w}(r_t-1)} w_{p_{i_t}^{r_t-1}} \mu(p_{i_t}) +  p_{i_t}^{-\delta_{\mathbf w}r_t} w_{p_{i_t}^{r_t}} \mu(1)\big)\\
	&& \quad \quad \quad \quad \quad \quad \quad \quad \quad \quad \prod_{\substack{m=1 \\ m \neq t}}^{\omega(n)} \big(p_{i_m}^{-\delta_{_\mathbf w}(r_m-1)} \mu(p_{i_m}) +  p_{i_m}^{-\delta_{\mathbf w}r_m}\mu(1)\big)\\
	&\!\!\overset{\eqref{mu-fun}}{=}& p_{i_t}^{-\delta_\mathbf w(r_t-1)}(p_{i_t}^{-\delta_\mathbf w}w_{p_{i_t}^{r_t}}-w_{p_{i_t}^{r_t-1}})
	\prod_{\substack{m=1 \\ m \neq t}}^{\omega(n)} \big(p_{i_m}^{-\delta_\mathbf w(r_m-1)} (p_{i_m}^{-\delta_\mathbf w}-1)\big).
	\eeqn
	This combined with the assumptions $\delta_\mathbf w \Le 0$ and  \eqref{growth-con} (which imply that $p_{i_m}^{-\delta_\mathbf w} \Ge 1$ and $p_{i_t}^{-\delta_\mathbf w}w_{p_{i_t}^{r_t}} \Ge w_{p_{i_t}^{r_t-1}}$) shows that $T_t \Ge 0$ for every positive integer $t \Le \omega(n).$ 
	Since $n$ is arbitrary, \eqref{norm-est} is valid for all integers $n \Ge 2.$ An application of Theorem~\ref{extension_stetler} now completes the proof. 
\end{proof}

We provide an example illustrating Corollary~\ref{addi-result}.

\begin{example}
	Let $\mathbf w = \omega.$ Note that $\omega$ is additive and satisfies
	\beq
	\label{imp-pro}
	\text{$\omega(p^j) = 1$ for every prime $p$ and integer $j \Ge 1$}.
	\eeq
	By \cite[Chap.~I, Equations~(1.6.1) and (1.6.2)]{Ti}, 
	\beq \label{series-expan}
	\sum_{j=2}^\infty \omega(j) j^{-s} = \zeta(s) \sum_{j = 1}^\infty p_j^{-s},\quad s \in \mathbb H_1.
	\eeq
	It follows from \eqref{imp-pro}, \eqref{series-expan} and \cite[Theorem~1.13]{Ap} that $\sigma_\omega = 1.$ 
	To compute $\delta_\omega,$ consider any $\epsilon > 0$ and an integer $n \Ge 1,$ 
	\beq \label{trunc-series}
	\sum_{\tiny{\mbox{\bf gpf}}(j) \Le p_n} \omega(j) j^{-\epsilon} \!\!&=&  \sum_{j=1}^n \sum_{\substack{i_1, i_2, \ldots, i_j =1 \\ i_s \neq i_t,~ s\neq t}}^n  \Bigg(\sum_{m_{i_1}, \ldots, m_{i_j} = 1}^\infty  j(p_{i_1}^{m_{i_1}} p_{i_2}^{m_{i_2}} \ldots p_{i_j}^{m_{i_j}})^{-\epsilon}\Bigg) \notag\\
	&=&  \sum_{j=1}^n \Bigg(\sum_{\substack{i_1, i_2, \ldots, i_j =1 \\ i_s \neq i_t,~ s\neq t}}^n \Big(j \prod_{r=1}^j \sum_{m_{i_r} = 1}^\infty  p_{i_r}^{-\epsilon m_{i_r}}\Big)\Bigg) < \infty.
	\eeq
	In view of \eqref{imp-pro}, $\sum_{\tiny{\mbox{\bf gpf}}(j) \Le p_n} \omega(j)$ diverges for all $n \Ge 1.$ This combined with \eqref{trunc-series} yields $\delta_\omega = 0,$ and hence, \eqref{growth-con} holds. 
	Therefore, by Corollary~\ref{addi-result}, $\mathcal M(\mathscr H_\omega)$ is isometrically isomorphic to $H^\infty(\mathbb H_0) \cap \mathcal D.$ 
	Moreover,
	\beqn
	\omega(pq) = 2 \overset{\eqref{imp-pro}}{\neq} \omega(p)\omega(q) \ \text{for distinct primes $p$ and $q.$}
	\eeqn
	So, $\omega$ is not multiplicative. Also, because $\omega$ is not monotone, \eqref{weight-by-measure}  does not hold. 
\end{example}

\section{Applications}\label{S3}
In this section, we discuss some applications of Theorem~\ref{extension_stetler}. 
The first one recovers \cite[Corollary~4.2]{St}. 

\begin{theorem}\label{Mul-class}
	Let $\mathbf w = \{w_j\}_{j=1}^\infty$ be an multiplicative function satisfying \eqref{mult-growth}.
	Then $\mathcal M(\mathscr H_\mathbf w)$ is isometrically isomorphic to $H^\infty(\mathbb H_{\frac{\delta_\mathbf w}{2}}) \cap \mathcal D.$
\end{theorem}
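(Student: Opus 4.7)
The plan is to reduce Theorem~\ref{Mul-class} to Theorem~\ref{extension_stetler} (applied via Remark~\ref{trivial-case}(b) with $k=1$) by verifying condition \eqref{norm-est-gen} for multiplicative $\mathbf w$. This should be strictly simpler than the additive analog handled in Corollary~\ref{addi-result}, because multiplicativity will make the relevant Dirichlet convolution factor as an Euler product rather than collapse to a sum.

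First I would note that any multiplicative $\mathbf w$ must satisfy $w_1 = 1$, so the series $\sum_{j \ge 1} w_j j^{-s}$ is well-defined and we may work with the index starting at $j=1$. For a fixed $n \ge 2$ with prime factorization $n = \prod_{m=1}^{\omega(n)} p_{i_m}^{r_m}$, the key observation is that the summand $j \mapsto j^{-\delta_\mathbf w} w_j \mu(n/j)$ is multiplicative in $j$ (as a product of the multiplicative functions $j^{-\delta_\mathbf w}$, $w_j$, and $\mu(n/j)$, the last viewed as a function of $j$ restricted to divisors of $n$). Hence Dirichlet convolution factors as an Euler product, giving
\begin{align*}
\sum_{j \mid n} j^{-\delta_\mathbf w} w_j \mu(n/j) = \prod_{m=1}^{\omega(n)} \Bigg(\sum_{s=0}^{r_m} p_{i_m}^{-\delta_\mathbf w s} \, w_{p_{i_m}^s} \, \mu(p_{i_m}^{r_m-s})\Bigg).
\end{align*}

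Next I would exploit that $\mu(p^k)=0$ for $k \ge 2$, so only $s = r_m$ and $s = r_m - 1$ survive in each inner sum, yielding the two-term expression
\begin{align*}
\sum_{s=0}^{r_m} p_{i_m}^{-\delta_\mathbf w s} w_{p_{i_m}^s} \mu(p_{i_m}^{r_m-s}) = p_{i_m}^{-\delta_\mathbf w (r_m-1)} \Big( p_{i_m}^{-\delta_\mathbf w} w_{p_{i_m}^{r_m}} - w_{p_{i_m}^{r_m-1}} \Big).
\end{align*}
Condition \eqref{mult-growth} (with $j = r_m \ge 1$) rearranges precisely to $p_{i_m}^{-\delta_\mathbf w} w_{p_{i_m}^{r_m}} \ge w_{p_{i_m}^{r_m-1}}$, so each factor in the Euler product is non-negative, and hence so is the product. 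This verifies \eqref{norm-est-gen} for $k=1$ and every $n \ge 2$; for $n=1$ the sum equals $w_1 = 1 \ge 0$. Finally, I would invoke Theorem~\ref{extension_stetler} (together with Remark~\ref{trivial-case}(b)) to conclude that $\mathcal M(\mathscr H_\mathbf w)$ is isometrically isomorphic to $H^\infty(\mathbb H_{\delta_\mathbf w/2}) \cap \mathcal D$.

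There is no real obstacle here: the whole content of the proof is the observation that multiplicativity turns the Möbius convolution into an Euler product of two-term non-negative factors. The only point requiring mild care is the bookkeeping around the index starting at $j=1$ versus $j=2$, which is handled by appealing to Remark~\ref{trivial-case}(b) rather than Theorem~\ref{extension_stetler} directly.
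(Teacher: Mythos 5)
Your proposal is correct and follows essentially the same route as the paper: factor the M\"obius convolution over the divisors of $n$ into an Euler product of two-term factors $p_{i_m}^{-\delta_\mathbf w(r_m-1)}\big(p_{i_m}^{-\delta_\mathbf w} w_{p_{i_m}^{r_m}} - w_{p_{i_m}^{r_m-1}}\big)$, observe each is non-negative by \eqref{mult-growth}, and conclude via Theorem~\ref{extension_stetler} and Remark~\ref{trivial-case}(b). The bookkeeping around $j=1$ (using $w_1=1$) is handled the same way the paper does.
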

\begin{proof}
	Fix an integer $n \Ge 2,$ let $\prod_{m=1}^{\omega(n)} p_{i_m}^{r_m},~r_m \in \mathbb Z_+,$ be the prime factorization of $n.$ Since any divisor of $n$ is given by $\prod_{m=1}^{\omega(n)} p_{i_m}^{s_m},~ s_m \in \mathbb N_{r_m},$ 
	\beqn
	\sum_{\substack{j \Ge 1\\ j | n}} j^{-\delta_\mathbf w} w_j \mu\Big(\frac{n}{j}\Big) &=& \prod_{m=1}^{\omega(n)} \sum_{s_m = 0}^{r_m} p_{i_m}^{-\delta_\mathbf w s_m} \, w_{p^{s_m}_{i_m}} \, \mu (p_{i_m}^{r_m - s_m})\\
	&\overset{\eqref{mu-fun}}=&  \prod_{m=1}^{\omega(n)} (p_{i_m}^{-\delta_\mathbf w(r_m-1)}) (p_{i_m}^{-\delta_\mathbf w} w_{p_{i_m}^{r_m}} - w_{p_{i_m}^{r_m-1}}) \overset{\eqref{mult-growth}}\Ge 0.
	\eeqn
	Thus, \eqref{norm-est-gen} is valid for all integers $n \Ge 2$ and is trivially true for $n = 1.$ Hence by Theorem~\ref{extension_stetler} and Remark~\ref{trivial-case}(b), the conclusion now follows.
\end{proof}

As a consequence of the previous theorem, we recover \cite[Theorem~3]{BB}. Indeed, 
for any $\alpha > 0,$ consider $\mathbf w_\alpha = \{(\mathbf d(j))^\alpha\}_{j=1}^\infty.$ Since $\mathbf d(n) = o(n^\epsilon)$ for every $\epsilon > 0$ (see \cite[Equation~(31)]{Ap}), $\sigma_{\mathbf w_\alpha} = 1$ and $\delta_{\mathbf w_\alpha} = 0.$ For any prime $p$ and integer $j \in \mathbb Z_+, \mathbf d(p^j) = j+1.$ Thus, \eqref{growth-con} holds, and hence by Theorem~\ref{Mul-class}, $\mathcal M(\mathscr H_{\mathbf w_{\alpha}})$ is isometrically isomorphic to $H^\infty(\mathbb H_0) \cap \mathcal D.$

The following proposition yields a family of Hilbert spaces $\mathscr H_\mathbf w$ illustrating the main result.

\begin{proposition}\label{Another-class}
	Let $\mathbf w = \{w_j\}_{j=1}^\infty$ be a multiplicative function satisfying 
	\begin{itemize}
		\item [$(i)$] $w_j = \mathcal{O}(j^{\delta})$ for every $\delta > 0,$
		\item [$(ii)$] for each prime $p,$ $\{w_{p^{j}}\}_{j=0}^\infty$ is increasing. 
	\end{itemize}   
	If $\tilde{\mathbf w} = \{1+ w_j\}_{j=1}^\infty,$ then $\mathcal M(\mathscr H_{\tilde{\mathbf w}})$ is isometrically isomorphic to $H^\infty(\mathbb H_0) \cap \mathcal D.$
\end{proposition}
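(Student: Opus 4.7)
The plan is to apply Theorem~\ref{extension_stetler} via Remark~\ref{trivial-case}(b) with $k = 1$, which requires computing $\delta_{\tilde{\mathbf{w}}}$ and then verifying the non-negativity condition \eqref{norm-est-gen} for $\tilde{\mathbf{w}} = \{1+w_j\}_{j=1}^\infty$.

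For $\delta_{\tilde{\mathbf{w}}}$, I would first note that $1 + w_j \Ge 1$, so $\sum_{\mathrm{gpf}(j) \Le p_n}(1+w_j) j^{-s}$ dominates $\sum_{\mathrm{gpf}(j) \Le p_n} j^{-s}$; the latter diverges whenever $\Re(s) \Le 0$, giving $\delta_{\tilde{\mathbf{w}}} \Ge 0$. For the reverse, multiplicativity of $\mathbf w$ supplies the Euler product
\begin{equation*}
\sum_{\mathrm{gpf}(j) \Le p_n} w_j\, j^{-s} \;=\; \prod_{k=1}^{n} \sum_{m=0}^{\infty} w_{p_k^m}\, p_k^{-ms},
\end{equation*}
and hypothesis~$(i)$ makes each inner series converge for $\Re(s) > 0$ (choose $0 < \delta < \Re(s)$ and use $w_{p_k^m} = O(p_k^{m\delta})$). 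Combined with the absolute convergence of $\sum_{\mathrm{gpf}(j) \Le p_n} j^{-s}$ on $\mathbb H_0$, this yields $\delta_{\tilde{\mathbf{w}}} \Le 0$, so $\delta_{\tilde{\mathbf{w}}} = 0$.

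Next, to verify \eqref{norm-est-gen} with $k = 1$, fix $n \Ge 1$ with prime factorization $n = \prod_{m=1}^{\omega(n)} p_{i_m}^{r_m}$ and split
\begin{equation*}
\sum_{j | n} (1+w_j)\, \mu\!\left(\tfrac{n}{j}\right) \;=\; \sum_{j | n} \mu\!\left(\tfrac{n}{j}\right) \;+\; \sum_{j | n} w_j\, \mu\!\left(\tfrac{n}{j}\right).
\end{equation*}
By M\"obius inversion the first sum equals $1$ if $n = 1$ and $0$ otherwise. For the second, both $\mathbf w$ and $\mu$ are multiplicative, so the Dirichlet convolution $\mathbf w * \mu$ is multiplicative; since $\mu(p^k) = 0$ for $k \Ge 2$, its value at $p^r$ collapses to $w_{p^r} - w_{p^{r-1}}$. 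Therefore
\begin{equation*}
\sum_{j | n} w_j\, \mu\!\left(\tfrac{n}{j}\right) \;=\; \prod_{m=1}^{\omega(n)} \left( w_{p_{i_m}^{r_m}} - w_{p_{i_m}^{r_m-1}} \right) \;\Ge\; 0
\end{equation*}
by hypothesis~$(ii)$. This establishes \eqref{norm-est-gen}, and Theorem~\ref{extension_stetler} together with Remark~\ref{trivial-case}(b) gives the desired isometric isomorphism.

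The main subtlety is that $\tilde{\mathbf w}$ is \emph{not} multiplicative, so Theorem~\ref{Mul-class} does not apply directly and the M\"obius sum must be handled by hand. What rescues the argument is the additive split above: the $\mu$-only part vanishes for $n \Ge 2$, while the $w_j\mu$-part regains multiplicative structure through the Dirichlet convolution. It is also essential to invoke Remark~\ref{trivial-case}(b) with $k = 1$ rather than $k = 2$, since $k = 2$ would introduce an extra term $-2\mu(n)$ that could outweigh the product when $n$ is squarefree with an even number of distinct prime factors (for instance, when $w_p = 1$ for every prime $p$).
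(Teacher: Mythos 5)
Your proposal is correct and follows essentially the same route as the paper: establish $\delta_{\tilde{\mathbf w}}=0$ from hypothesis $(i)$, split the M\"obius sum into $\sum_{j|n}\mu(n/j)$ (which is $0$ for $n\Ge 2$ by M\"obius inversion, i.e.\ \cite[Theorem~2.1]{Ap}) plus the Dirichlet convolution $\mathbf w * \mu$, evaluate the latter as $\prod_m\bigl(w_{p_{i_m}^{r_m}}-w_{p_{i_m}^{r_m-1}}\bigr)\Ge 0$ via multiplicativity and $(ii)$, and invoke Theorem~\ref{extension_stetler} with Remark~\ref{trivial-case}(b). Your closing observation about why one must take $k=1$ rather than $k=2$ in \eqref{norm-est-gen} is a correct and worthwhile point that the paper leaves implicit.
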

\begin{proof} 
	For a real number $r > 1,$ let $\delta$ be a positive number such that $r > \delta + 1.$ Then by $(i),$
	\beq
	\label{value-sigma}
	\sum_{j=2}^\infty (1+w_j) j^{-r} \Le \sum_{j=2}^\infty j^{-r} + C_\delta \sum_{j=2}^\infty j^{-(r-\delta)} < \infty,
	\eeq
	which yields $\sigma_{\tilde{\mathbf w}} \Le 1.$ Moreover, since $\sum_{j=2}^\infty j^{-1}$ diverges, $\sigma_{\tilde{\mathbf w}} = 1.$ By using
	$(i)$, we also get that $\delta_{\tilde{\mathbf w}} = 0.$ For any $\sigma > 0,$ let $\delta$ be such that $0 < \delta < \sigma.$ Then, \cite[Lemma~3.2]{St} combined with $(i)$ yields
	\beq
	\label{trun-con}
	\sum_{\substack{j \Ge 2 \\\tiny{\mbox{\bf gpf}}(j) \Le p_n }} w_j j^{- \sigma} \Le \sum_{\substack{j \Ge 2 \\\tiny{\mbox{\bf gpf}}(j) \Le p_n }} j^{-(\sigma -\delta)} < \infty, \quad n \Ge 1.
	\eeq
	In view of $(ii),$ the series $\sum_{j=0}^\infty w_{2^j}$ diverges. This together with \eqref{trun-con} yields $\delta_\mathbf w = 0.$ Hence, by $(ii),$ \eqref{growth-con} holds. 
	Thus, by using the steps in the proof of Theorem~\ref{Mul-class}, we obtain
	\beqn
	\sum_{\substack{j \Ge 1 \\ j | n}} w_j \mu\Big(\frac{n}{j}\Big) \Ge 0, \quad n \Ge 1.
	\eeqn
	Combining this with \cite[Theorem~2.1]{Ap} gives
	\beqn 
	\sum_{\substack{j \Ge 1 \\ j | n}} (1+w_j) \mu\Big(\frac{n}{j}\Big) \Ge 0, \quad n \Ge 1.
	\eeqn
	The desired conclusion is now immediate from Theorem~\ref{extension_stetler} together with Remark~\ref{trivial-case}(b). 
\end{proof}

Note that $\tilde{\mathbf w}$ defined above is never multiplicative. Moreover, Proposition~\ref{Another-class} applies to $\mathbf w = \{\mathbf d(j)\}_{j=1}^\infty.$ We conclude this paper by revealing that Theorem~\ref{extension_stetler} also recovers \cite[Theorem~1.11]{Mc-1} for some particular sequences. 

\begin{example}
	Let $\mathbf w$ be given by \eqref{weight-by-measure}. Then by \cite[Equation~(1.4)]{Mc-1}, $\sigma_\mathbf w \Le 1.$ Since $\mathbf w$ is an increasing sequence, $\sigma_\mathbf w =  1.$ So, $\kappa_\mathbf w$ converges absolutely on $\mathbb H_{1/2} \times \mathbb H_{1/2}.$ For an integer $\alpha > 0,$ consider the sequence $\mathbf w_\alpha = \{(\log(j))^\alpha\}_{j=2}^\infty.$ By \cite[Section~1]{Mc-1}, 
	\beqn
	(\log(j))^{-\alpha} = \int_{[0, \infty)} j^{-2 \sigma} \frac{2^\alpha}{\Gamma(\alpha)} \sigma^{-1+\alpha} d\sigma,\quad j \geq 2,
	\eeqn
	where $\Gamma$ denotes the Gamma function, and hence $\sigma_{\mathbf w_\alpha} = 1.$ Also, combining \cite[Equation~(1.4)]{Mc-1} with \cite[Lemma~2]{St} yields $\delta_{\mathbf w_\alpha} = 0.$ In addition, by \cite[Equation~(3.16)]{F-I}, we obtain the non-negativity of the {\it Generalized von Mangoldt function}:
	\beq 
	\label{subclass-non}
	\sum_{\substack{j \Ge 2\\ j | n}} (\log(j))^{\alpha} \mu\Big(\frac{n}{j}\Big) \Ge 0.
	\eeq
	An application of Theorem~\ref{extension_stetler} now shows that $\mathcal M(\mathscr H_{\mathbf w_\alpha})$ is isometrically isomorphic to $H^\infty(\mathbb H_0) \cap \mathcal D.$ 
\end{example}

The previous example yields that for any integer $\alpha > 0,$  $\mathbf w_\alpha$ satisfies \eqref{subclass-non}. In general, we don't know whether $\mathbf w$ satisfying \eqref{weight-by-measure} guarantees that  
\beqn
\sum_{\substack{j \Ge n_0 \\ j | n}} w_{j} \mu\Big(\frac{n}{j}\Big) \Ge 0,\quad n \Ge n_0.
\eeqn

\medskip \textit{Acknowledgment}. \
I am grateful to Sameer Chavan for suggesting this topic. Needless to say, this paper would not have seen the present form without his continuous support.   
I am also thankful to the anonymous referee for several valuable inputs improving the presentation of the paper.

{}

\end{document}